\newtheorem{theorem}{Theorem}
\numberwithin{theorem}{section}
\newtheorem{corollary}[theorem]{Corollary}
\newtheorem{lemma}[theorem]{Lemma}
\newtheorem{proposition}[theorem]{Proposition}
\theoremstyle{definition}
\newtheorem{definition}[theorem]{Definition}
\newcommand{\drel}{\operatorname{\Delta-Rel}}
\newcommand{\wf}{\operatorname{WF}}
\newcommand{\ind}{\operatorname{Ind}}
\begin{document}

\title[Set-theoretic reflection and class-induction]{Set-theoretic reflection is equivalent to induction over well-founded classes}
\author{Anton Freund}
\address{Fachbereich Mathematik, Technische Universit\"at Darmstadt, Schlossgartenstr.~7, 64289~Darmstadt, Germany}
\email{freund@mathematik.tu-darmstadt.de}

\begin{abstract}
We show that induction over $\Delta(\mathbb R)$-definable well-founded classes is equivalent to the reflection principle which asserts that any true formula of first order set theory with real parameters holds in some transitive set. The equivalence is proved in primitive recursive set theory (which is weaker than Kripke-Platek set theory) extended by the axiom of dependent choice.
\end{abstract}

\subjclass[2010]{03E30, 03B30, 03F05}
\keywords{Set theory, reflection, transitive model, induction, proper class, infinite proof}

\maketitle

{\let\thefootnote\relax\footnotetext{\copyright~2020 American Mathematical Society.\\
First published in \emph{Proceedings of the American Mathematical Society} 148 (2020) 4503-4515, published by American Mathematical Society.}

\section{Introduction}

The present paper connects two major themes of set theory: reflection and the absoluteness of well-foundedness, where the latter is embodied by the principle of induction over well-founded classes.

Reflection principles in set theory express that any suitable property of the set-theoretic universe is already satisfied in some set. They are important for the foundations of set theory, because they have a particularly strong intrinsic justification. This is expressed in the following statement, which H.~Wang~\cite[Section~8.7]{wang96} attributes to K.~G\"odel:
\begin{equation*}
\parbox{0.9\textwidth}{``The universe of sets cannot be uniquely characterized (i.e.,~distinguished from all its initial segments) by any internal structural property of the membership relation in it which is expressible in any logic of finite or transfinite type, including infinitary logics of any cardinal number."}
\end{equation*}
As the last part of the quoted sentence suggests, reflection principles have been considered for rather general classes of properties. Often, the aim was to justify strong axioms that go beyond those of Zermelo-Fraenkel set theory. Early results in this direction are due to A.~L\'evy~\cite{levy-reflection} and P.~Bernays~\cite{bernays-reflection}. As examples of recent investigations into strong reflection principles, we cite the work of W.~Tait~\cite{tait-reflection}, P.~Koellner~\cite{koellner-reflection} and P.~Welch~\cite{welch-reflection}.

In the present paper, we focus on reflection for first order formulas in the usual language of set theory. For reasons explained below, we must also assume that all parameters are reals (i.\,e.~subsets of~$\omega$). Hence our reflection principle is given by the schema
\begin{equation*}
 \forall_{r\subseteq\omega}(\psi(r)\to\exists_N(\text{``$N$ is a transitive set''}\land\psi(r)^N)),
\end{equation*}
where $\psi\equiv\psi(x)$ is a formula with a single free variable and $\psi(r)^N$ refers to the usual notion of relativization. Note that several parameters are readily coded into a single one. It is well-known that the given reflection principle is provable in Zermelo-Fraenkel set theory, where it plays an important technical role (e.\,g.~for the justification of forcing in terms of countable transitive models). First order reflection is also important to understand the foundations of weaker set theories. In particular, the extension of Kripke-Platek set theory by the given reflection principle is one of the strongest axiom systems for which we have an ordinal analysis, which is due to~M.~Rathjen~\cite{rathjen-reflection}. In the context of this analysis, Rathjen mentions that reflection for formulas of first order set theory corresponds to $\beta$-model reflection in second order arithmetic (a detailed proof of a similar result can be found in~\cite{rathjen-mahlo-so}). Note that our restriction to real parameters is completely natural when one is interested in consequences for second order arithmetic.

Since our reflection principle is provable in Zermelo-Fraenkel set theory, we will work over a weaker base theory, namely primitive recursive set theory with infinity. This theory is based on the notion of primitive recursive set function, which has been identified by R.~Jensen and C.~Karp~\cite{jensen-karp}. Whenever we speak of primitive recursive set functions, we will assume that $\omega$ is admitted as a parameter. Primitive recursive set theory, as described by Rathjen~\cite{rathjen-set-functions}, consists of basic axioms (extensionality, regularity and infinity) and axioms which ensure that the primitive recursive set functions are total and satisfy their defining equations. Alternatively, one can read the entire paper with Kripke-Platek set theory as base theory (again with infinity; see~\cite{barwise-admissible} for an extensive introduction). The latter is stronger than primitive recursive set theory, as $\Sigma$-recursion covers primitive recursion. Note that the choice of Kripke-Platek set theory ties in with Rathjen's aforementioned ordinal analysis. Furthermore, the notion of $\Delta$-class is particularly natural in Kripke-Platek set theory, where $\Delta$-separation is available. In any case, we will eventually extend our base theory by the axiom of dependent choice (\textbf{DC}), in order to obtain a descending sequence in an ill-founded order.

We now specify the induction principle to which reflection is supposed to be equivalent. Let us agree that a relation $<_X$ on a class $X$ (both definable) is well-founded if we have
\begin{equation*}
\wf[X]:\equiv\forall_w(w\neq\emptyset\to\exists_{x\in w}\forall_{y\in w}\neg \, y<_X x).
\end{equation*}
For a formula $\varphi(x,\vec z)$ with a distinguished induction variable $x$, induction along $X$ can be expressed as
\begin{equation*}
\ind[X,\varphi]:\equiv\forall_{\vec z}(\forall_{x\in X}(\forall_{y<_X x}\varphi(y,\vec z)\to\varphi(x,\vec z))\to\forall_{x\in X}\varphi(x,\vec z)).
\end{equation*}
Note that there is no restriction on the values of the parameters~$\vec z$. By induction for $\Delta(\mathbb R)$-definable well-founded classes we mean the schema
\begin{equation*}
\wf[X]\to\ind[X,\varphi],
\end{equation*}
where $X$ is $\Delta$-definable with a real parameter. Let us stress the fact that~$X$ is not required to be set-like.

To make the statement of our induction principle more precise, we recall that a quantifier is bounded if it occurs in the form $\forall_{x\in y}\cdots$ or in the form $\exists_{x\in y}\cdots$, where $y$ is a set (not a proper class). A $\Sigma$-formula ($\Pi$-formula) is a formula in negation normal form in which all universal (existential) quantifiers are bounded. Over Kripke-Platek set theory, any $\Sigma$-formula is equivalent to a $\Sigma_1$-formula, but over primitive recursive set theory the former notion is more liberal. A $\Delta$-class is one that can be defined both by a $\Sigma$-formula and by a $\Pi$-formula. To ensure that an ordered class $(X,<_X)$ is $\Delta$-definable, it is most convenient to exhibit a $\Sigma$-formula $\theta_\Sigma(x,y,r)$ and a $\Pi$-formula $\theta_\Pi(x,y,r)$ that define the relation $x\leq_X y$. The point is that $X$ and $<_X$ can be recovered from $\leq_X$, provided that $<_X$ is well-founded and hence irreflexive. Formally, we abbreviate
\begin{equation*}
x\leq_X^r y:\equiv\theta_\Sigma(x,y,r),\quad x\in X^r:\equiv x\leq_X^rx,\quad x<_X^ry:\equiv x\leq_X^ry\land x\neq y.
\end{equation*}
To express that $\theta_\Sigma$ and $\theta_\Pi$ provide a $\Delta$-definition of $X^r$ we use the formula
\begin{equation*}
\drel[X^r]:\equiv\forall_{x,y}(x\leq_X^r y\to x\in X^r\land y\in X^r)\land\forall_{x,y}(\theta_\Sigma(x,y,r)\leftrightarrow\theta_\Pi(x,y,r)),
\end{equation*}
where the first conjunct ensures that $\leq_X^r$ does indeed have field~$X^r$. Induction over $\Delta(\mathbb R)$-definable well-founded classes can now officially be given as the schema
\begin{equation*}
\forall_{r\subseteq\omega}(\drel[X^r]\land\wf[X^r]\to\ind[X^r,\varphi]).
\end{equation*}
The proof that reflection implies induction is rather straightforward, so we give it right away. The result is, of course, a schema: Given arbitrary formulas $\theta_\Sigma$, $\theta_\Pi$ and $\varphi$ as in the induction principle, we will construct a formula $\psi$ and a proof that reflection for $\psi$ implies induction for $\varphi$ along the order defined by $\theta_\Sigma$ and $\theta_\Pi$.

\begin{proposition}\label{prop:reflection-to-induction}
Reflection for first order formulas with real parameters implies induction over $\Delta(\mathbb R)$-definable well-founded classes.
\end{proposition}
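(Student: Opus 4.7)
My approach is a proof by contradiction using well-founded induction inside a reflected transitive set. Assuming induction fails, the natural candidate for the ``bad set'' $\{x\in X^r:\neg\varphi(x,\vec z)\}$ is only a class, so $\wf[X^r]$ cannot be applied to it directly. Reflection is exactly the tool that turns this class, when intersected with a sufficiently rich transitive set, into an actual set.

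Concretely, suppose there are $\vec z$ and $x_0\in X^r$ witnessing that progressiveness along $X^r$ holds for $\vec z$ while $\neg\varphi(x_0,\vec z)$. Since $\vec z$ need not be a real, I existentially quantify it away and apply reflection to the formula
\begin{equation*}
\psi(r) :\equiv \drel[X^r] \land \exists_{\vec z,x_0}\bigl(x_0\in X^r \land \neg\varphi(x_0,\vec z) \land \forall_{x\in X^r}(\forall_{y<_X^r x}\varphi(y,\vec z)\to\varphi(x,\vec z))\bigr),
\end{equation*}
whose only parameter is the real $r$. Reflection then produces a transitive set $N$ with $\psi(r)^N$, so $\drel[X^r]^N$ holds and some $\vec z,x_0\in N$ witness the failure of induction inside $N$.

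Next I establish that the order $\leq_X^r$ is absolute between $V$ and $N$: since $\theta_\Sigma$ is upward persistent and $\theta_\Pi$ downward persistent, and since both $\drel[X^r]$ in $V$ and $\drel[X^r]^N$ in $N$ hold, the four statements $\theta_\Sigma^N(x,y,r)$, $\theta_\Sigma(x,y,r)$, $\theta_\Pi(x,y,r)$, $\theta_\Pi^N(x,y,r)$ collapse into one on $N\times N$. Hence $<_X^r$ and $X^r$ agree with $<_X^{r,N}$ and $X^{r,N}$ on elements of $N$. I then form the set $B=\{x\in N : N\models x\in X^r\land\neg\varphi(x,\vec z)\}$, available by separation because satisfaction in the transitive set $N$ is a primitive recursive operation of $x$. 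By construction $x_0\in B\subseteq X^r$.

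Applying $\wf[X^r]$ in $V$ to this set $B$ produces $x^*\in B$ with no $<_X^r$-predecessor in $B$. Every $y<_X^{r,N}x^*$ lies in $X^{r,N}\subseteq N$ and, by the absoluteness step, also satisfies $y<_X^r x^*$, so minimality of $x^*$ forces $y\notin B$ and hence $N\models\varphi(y,\vec z)$. Progressiveness holding inside $N$, applied at $x^*$, then gives $N\models\varphi(x^*,\vec z)$, contradicting $x^*\in B$. The only delicate point I expect is the absoluteness step: one must include $\drel[X^r]$ as a conjunct of $\psi$ so that the reflected copy is available in $N$, since this is what lets the well-foundedness hypothesis used in $V$ cooperate with the progressiveness hypothesis relativised to $N$.
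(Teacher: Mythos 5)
Your proof is correct and follows essentially the same route as the paper: you reflect the conjunction of the $\Delta$-definition condition with the (existentially unfolded) failure of induction, use upward/downward persistence of $\theta_\Sigma$ and $\theta_\Pi$ together with their equivalence in both $V$ and $N$ to get absoluteness of $\leq_X^r$ on $N$, carve out the bad set $B$ by separation, and play $\wf[X^r]$ in $V$ against progressiveness relativised to $N$. The only cosmetic differences are that the paper's reflection formula keeps just the conjunct $\forall_{x,y}(\theta_\Sigma\leftrightarrow\theta_\Pi)$ rather than all of $\drel[X^r]$, and justifies the formation of $B$ by $\Delta_0$-separation rather than via primitive recursiveness of satisfaction.
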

\begin{proof}
 Given formulas $\theta_\Sigma$, $\theta_\Pi$ and $\varphi$ as in the above exposition of the induction principle, we put
 \begin{equation*}
  \psi(r):\equiv\forall_{x,y}(\theta_\Sigma(x,y,r)\leftrightarrow\theta_\Pi(x,y,r))\land\neg\ind[X^r,\varphi].
 \end{equation*}
 Working in primitive recursive set theory, we fix a value $r\subseteq\omega$ of the parameter and assume that reflection for $\psi(r)$ holds. The induction principle is established by contradiction: We assume that the premises $\drel[X^r]$ and $\wf[X^r]$ hold while the conclusion $\ind[X^r,\varphi]$ fails. Then $\psi(r)$ holds, and reflection yields $\psi(r)^N$ for some transitive set~$N$. Due to the first conjunct of $\psi(r)^N$, we can use the absoluteness properties of $\Sigma$-~and $\Pi$-formulas to show that $x\in X^r$ and $x<_X^ry$ are absolute between $N$ and the universe (for $x,y\in N$). Writing $\varphi_N(x,\vec z):\equiv x\in N\to\varphi(x,\vec z)^N$, the second conjunct of $\psi(r)^N$ amounts to
 \begin{equation*}
  \neg\ind[X^r,\varphi]^N\equiv\exists_{\vec z\in N}(\forall_{x\in X^r}(\forall_{y<_X^rx}\varphi_N(y,\vec z)\to\varphi_N(x,\vec z))\land\exists_{x\in X^r}\neg\varphi_N(x,\vec z)).
 \end{equation*}
 Fix witnesses $\vec z$ for this statement. Since $\Delta_0$-separation is available in primitive recursive set theory (see e.\,g.~\cite[Corollary~1.1.10]{freund-thesis}), we can form the set
 \begin{equation*}
  w:=\{x\in N\,|\,(x\in X^r)^N\land\neg\varphi(x,\vec z)^N\}=\{x\in X^r\,|\,\neg\varphi_N(x,\vec z)\}.
 \end{equation*}
 The second conjunct of $\neg\ind[X^r,\varphi]^N$ tells us that $w$ is non-empty. By the assumption $\wf[X^r]$ we get an element $x\in w$ such that $y<_X^r x$ fails for all $y\in w$. The latter means that $\forall_{y<_X^r x}\varphi_N(y,\vec z)$ holds. Now the first conjunct of $\neg\ind[X^r,\varphi]^N$ yields~$\varphi_N(x,\vec z)$, which is incompatible with $x\in w$.
\end{proof}

As the main result of the present paper, we will prove the converse direction: induction over $\Delta(\mathbb R)$-definable well-founded classes implies reflection for first order formulas with real parameters, assuming the axiom of dependent choice. An official statement of the resulting equivalence is given as Theorem~\ref{thm:main-result} below.

Let us sketch how our main result is proved: The basic idea is to approach reflection via completeness. Assuming that $\psi(r)$ fails in every transitive model, we will construct a class-sized proof tree of~$\neg\psi(r)$ (somewhat similar to a $\beta$-proof in the sense of J.-Y.~Girard~\cite{girard-intro}). Induction over this proof will show that $\neg\psi(r)$ holds in the set-theoretic universe, so that reflection holds because its premise fails.

Our approach to completeness relies on K.~Sch\"utte's~\cite{schuette56,schuette77} method of deduction chains: The idea is to build an attempted proof of a given formula $\varphi$ from the bottom~up. This will yield either a well-founded proof of~$\varphi$ or an attempted proof with an infinite branch. From the latter one can read off a countermodel to~$\varphi$, provided that the proof search was sufficiently systematic.

For each set~$M$, we will use the method of deduction chains to search for an extensional model $(M_f,\in)\vDash\psi(r)$ with $\omega\cup\{r\}\subseteq M_f\subseteq M$. Since Sch\"utte's approach yields a countermodel to the end-formula of the attempted proof, we should search for a proof $S^r_\psi(M)$ of the formula
\begin{equation}\label{eq:end-formula}
 \neg\psi(r)\lor\neg\forall_{x,y}(\forall_z(z\in x\leftrightarrow z\in y)\to x=y),
\end{equation}
where the second disjunct ensures that the resulting model is extensional. In order to obtain a model $M_f\subseteq M$, we will work in $M$-logic. This means that $S^r_\psi(M)$ may use the infinitary rule
\vspace{.3\baselineskip}
\begin{prooftree}
\Axiom$\cdots\qquad\theta(a\fCenter)\qquad\cdots\qquad(a\in M)$
\RightLabel{,}
\UnaryInf$\forall_x\,\theta(\fCenter x)$
\end{prooftree}
\vspace{.3\baselineskip}
which allows to infer a universal formula $\forall_x\,\theta(x)$ once we have proved the premise $\theta(a)$ for each $a\in M$. The proof $S^r_\psi(M)$ can thus be realized as a labelled subtree of~$M^{<\omega}$, the tree of finite sequences with entries in $M$ (where we assume $\{0,1\}\subseteq M$ to have names for the premises of the usual unary and binary rules). The method of deduction chains will allow us to build the proofs $S^r_\psi(M)$ in a particularly uniform way. This construction can have two outcomes:
\begin{itemize}
\item There is an $M\supseteq\omega\cup\{r\}$ such that the tree $S^r_\psi(M)\subseteq M^{<\omega}$ has an infinite branch $f:\omega\to M$ (i.\,e.~we have $\langle f(0),\dots,f(n-1)\rangle\in S^r_\psi(M)$ for all $n\in\omega$).
\item The proof tree $S^r_\psi(M)$ is well-founded (with respect to end-extensions of sequences) for every set $M\supseteq\omega\cup\{r\}$.
\end{itemize}
In the first case, we will see that
\begin{equation*}
 M_f:=\{f(i)\,|\,i\in\omega\}\cup\omega\cup\{r\}\subseteq M
\end{equation*}
is an extensional model of $\psi(r)$. An application of Mostowski collapsing will then yield the transitive model required for reflection. Now consider the second case, in which all proof trees $S^r_\psi(M)$ are well-founded. Due to the uniformity of the construction, we will be able to glue the trees $S^r_\psi(M)$ into a class-sized proof tree $S^r_\psi(\mathbb V)\subseteq\mathbb V^{<\omega}$, where $\mathbb V$ denotes the universe of sets. Invoking the principle of induction over $\Delta(\mathbb R)$-definable well-founded classes, we will show that all formulas in the proof $S^r_\psi(\mathbb V)$ are true. In particular, this applies to the end-formula~(\ref{eq:end-formula}) of our proofs. Since extensionality holds in the set-theoretic universe, it follows that $\psi(r)$ must fail, so that reflection holds. Full details of the construction will be worked out in the following sections.

Based on the preceeding proof sketch, we can now explain the restriction to real parameters: Each node of a tree $S^r_\psi(M)$ does only contain a finite amount of information, which means that the information collected along a branch is at most countable. To be somewhat more precise, the method of deduction chains relies on the fact that each formula on a branch $f$ is false in the corresponding model~$M_f$. This property is proved by induction over the length of formulas. In order to ensure that a formula $\exists_x\,\theta(x)$ on $f$ fails in $M_f$, every instance $\theta(a)$ with $a\in M_f$ must thus appear on $f$ as well. However, a branch does only contain countably many formulas. Instead of reals, one could consider all hereditarily countable sets as parameters. This does not seem to increase generality, since hereditarily countable sets can be represented by well-founded trees (see e.\,g.~\cite[Section~VII.3]{simpson09}).

To explain the context of our result, we recall that equivalences between reflection and induction principles are well-known in the context of first and second order arithmetic: Due to G.~Kreisel and A.~L\'evy~\cite{kreisel68}, reflection (with parameters) over elementary arithmetic is equivalent to induction along the natural numbers, while reflection over Peano arithmetic is equivalent to induction along (the usual notation system for) the ordinal $\varepsilon_0=\min\{\alpha\,|\,\omega^\alpha=\alpha\}$ (these results have been refined by D.~Leivant~\cite{leivant83} and H.~Ono~\cite{ono87}). We point out that the previous reflection principles are usually formulated in terms of provability rather than the existence of models. However, the two formulations are equivalent in the presence of completeness. As shown by H.~Friedman~\cite{friedman-rm}, the principle of $\omega$-model reflection in second order arithmetic is equivalent to bar induction, i.\,e.~induction along arbitrary well-orders on~$\mathbb N$ (this result has been refined by S.~Simpson~\cite{simpson-bar-induction} as well as G.~J\"ager and T.~Strahm~\cite{jaeger-strahm-bi-reflection}).

Our use of deduction chains is inspired by the aforementioned paper by J\"ager and Strahm. However, one important innovation is necessary in the context of set theory: J\"ager and Strahm search for an $\omega$-model, which means that the underlying set of the model is essentially fixed in advance (second order variables are treated as predicates). In set theory, we must consider all possible supersets $M$, and the underlying set $M_f\subseteq M$ of the resulting model is determined by the branch~$f$, as described above. Our definitions of $S^r_\psi(M)$ and $M_f$ are very close to a draft~\cite{freund-bh-preprint} by the present author. In~\cite{freund-equivalence} it has been shown that the construction becomes even more uniform in the case of the constructible hierarchy: essentially, the map $\alpha\mapsto S^r_\psi(\mathbb L_\alpha)$ can be turned into a dilator in the sense of Girard~\cite{girard-pi2}. This allows for an even finer analysis of reflection under the axiom of constructibility, as shown in a first version of the present paper (available as arXiv:1909.00677v1).

To conclude this introduction, we mention two refinements of our result: Firstly, the proof of our equivalence reveals some relations between the complexity of the reflection formulas that are needed to derive induction for formulas from a given complexity class, and vice~versa. These will be discussed at the end of the paper. Secondly, one can observe that the reflection formula and the corresponding well-founded class depend on the same real parameter. In particular, it follows that reflection for first order sentences is equivalent to induction along well-founded classes that are $\Delta$-definable without parameters.

\section{Transitive models via deduction chains}

In the introduction we have described how Sch\"utte's method of deduction chains can be used to search for transitive models of set-theoretic formulas. The details of this approach will be worked out in the present section.

We fix some notation and terminology: Let $x^{<\omega}$ be the set of finite sequences with entries from a given set~$x$. By $\sigma\vartriangleleft\tau$ we express that the sequence $\sigma$ is a proper end-extension of the sequence~$\tau$. Writing $\sigma=\langle\sigma_0,\dots,\sigma_{n-1}\rangle$ and $\tau=\langle\tau_0,\dots,\tau_{m-1}\rangle$, this means that we have $m<n$ and $\sigma_i=\tau_i$ for all~$i<m$. A non-empty subset $T\subseteq x^{<\omega}$ is a tree if $\sigma\in T$ and $\sigma\vartriangleleft\tau$ imply~$\tau\in T$. In particular, any tree contains the empty sequence~$\langle\rangle$. The extension of a sequence $\sigma=\langle\sigma_0,\dots,\sigma_{n-1}\rangle\in x^{<\omega}$ by an element $a\in x$ will be denoted by $\sigma^\frown a:=\langle\sigma_0,\dots\sigma_{n-1},a\rangle$.

All object formulas that we consider will be formulas in the usual language of first order set theory (with equality). They may contain arbitrary sets as parameters (i.\,e.~each set $a$ may be used as a constant symbol with canonical interpretation~$a$). By an $M$-formula we shall mean a formula with parameters from~$M$. For technical reasons, all object formulas are assumed to be in negation normal form. This means that formulas are built from literals (negated and unnegated prime formulas) by the connectives $\land,\lor$ and the quantifiers $\forall,\exists$. To negate a formula one applies de Morgan's rules and omits double negations in front of prime formulas. Expressions such as $\neg\varphi$ and $\varphi\to\psi$ should thus be read as abbreviations (e.\,g.~one may write $\neg(x\in y\to x\in z)$ to refer to the object formula $x\in y\land\neg x\in z$). Unless noted otherwise, we assume that object formulas are closed (i.\,e.~contain no free variables).

An $M$-sequent is a finite sequence of (closed) $M$-formulas. The intuitive interpretation of a sequent $\Gamma=\langle\varphi_0,\dots,\varphi_{n-1}\rangle$ is the disjunction $\varphi_0\lor\dots\lor\varphi_{n-1}$. In particular, the order of the formulas in a sequent is irrelevant from a semantic perspective. It will, however, be crucial for the construction below. As usual, we write $\Gamma,\varphi$ rather than $\Gamma^\frown\varphi$ in the context of sequents. Similarly, we write $\varphi,\Gamma:=\langle\varphi,\varphi_0,\dots,\varphi_{n-1}\rangle$ for $\Gamma$ as before. We can now give a precise definition of the search trees from the introduction. To understand the details of the definition one should consider the proof of Proposition~\ref{prop:branch-model}.

\begin{definition}\label{def:search-tree}
Consider a formula $\psi\equiv\psi(x)$ with a single free variable $x$ and no parameters. Given a real $r\subseteq\omega$ and a set $M\supseteq\omega\cup\{r\}$, we define a tree $S^r_\psi(M)\subseteq M^{<\omega}$ and a labelling function $l_M:S^r_\psi(M)\to\text{``$M$-sequents"}$ by recursion over the finite sequences in $M^{<\omega}$. In the base of the recursion we stipulate
\begin{equation*}
\langle\rangle\in S^r_\psi(M)\qquad\text{and}\qquad l_M(\langle\rangle)=\langle\neg\psi(r),\neg\forall_{x,y}(\forall_z(z\in x\leftrightarrow z\in y)\to x=y)\rangle.
\end{equation*}
As $S^r_\psi(M)$ is to become a tree, the recursion step is only interesting for $\sigma\in S^r_\psi(M)$. We distinguish cases according to the first formula of the sequent $l_M(\sigma)=\varphi,\Gamma$ (which will never be empty): If $\varphi$ is a true literal, then $\sigma$ is a leaf of $S^r_\psi(M)$. If $\varphi$ is a false literal, then we stipulate
\begin{equation*}
\sigma^\frown a\in S^r_\psi(M)\,\Leftrightarrow\, a=0\qquad\text{and}\qquad l_M(\sigma^\frown 0)=\Gamma,\varphi,
\end{equation*}
observing $0\in\omega\subseteq M$. If $\varphi\equiv\varphi_0\land\varphi_1$ is a conjunction, then we set
\begin{equation*}
\sigma^\frown a\in S^r_\psi(M)\,\Leftrightarrow\, a\in\{0,1\}\qquad\text{and}\qquad l_M(\sigma^\frown a)=\Gamma,\varphi,\varphi_a.
\end{equation*}
If $\varphi\equiv\varphi_0\lor\varphi_1$ is a disjunction, then we define
\begin{equation*}
\sigma^\frown a\in S^r_\psi(M)\,\Leftrightarrow\, a=0\qquad\text{and}\qquad l_M(\sigma^\frown 0)=\Gamma,\varphi,\varphi_0,\varphi_1.
\end{equation*}
If $\varphi\equiv\forall_x\,\theta(x)$ is universal, then we put
\begin{equation*}
\sigma^\frown a\in S^r_\psi(M)\,\Leftrightarrow\,a\in M\qquad\text{and}\qquad l_M(\sigma^\frown a)=\Gamma,\varphi,\theta(a).
\end{equation*}
To avoid confusion, we note that $a\in M$ is automatic for $\sigma^\frown a\in S^r_\psi(M)\subseteq M^{<\omega}$. Finally, assume that $\varphi\equiv\exists_x\,\theta(x)$ is existential. Writing $\sigma=\langle\sigma_0,\dots,\sigma_{n-1}\rangle$, let $b$ be the first entry of the list
\begin{equation*}
r,\sigma_0,0,\sigma_1,1,\dots,\sigma_{n-1},n-1,n,n+1,n+2,\dots
\end{equation*}
such that $\theta(b)$ does not already occur in $\Gamma$ (note that $\omega\cup\{r\}\subseteq M$ and $\sigma\in M^{<\omega}$ ensure $b\in M$). Then set
\begin{equation*}
\sigma^\frown a\in S^r_\psi(M)\,\Leftrightarrow\, a=0\qquad\text{and}\qquad l_M(\sigma^\frown 0)=\Gamma,\varphi,\theta(b)
\end{equation*}
to complete the recursive definition of $S^r_\psi(M)$ and $l_M$.
\end{definition}

To see how the previous definition can be formalized in primitive recursive set theory, we recall some observations from~\cite[Chapter~1]{freund-thesis} (all theorem numbers in the present paragraph refer to this reference): Proposition~1.2.8 tells us that $M\mapsto M^{<\omega}$ is a primitive recursive set function (with parameter $\omega$). In particular, $M^{<\omega}$~exists as a set. Using Corollary~1.2.11, one can verify that $\sigma\in S^r_\psi(M)$ is a primitive recursive relation in $\sigma,r$ and $M$, and that $(M,\sigma)\mapsto l_M(\sigma)$ is a primitive recursive set function. Combining these facts by Corollary~1.1.10, it follows that $S^r_\psi(M)\subseteq M^{<\omega}$ is a set and that $(r,M)\mapsto S^r_\psi(M)$ is primitive recursive. Similarly, Proposition~1.2.2 tells us that the class-sized function $(M,\sigma)\mapsto l_M(\sigma)$ yields a primitive recursive family of set-sized functions~$l_M$.

Still concerning Definition~\ref{def:search-tree}, we point out that it is not really necessary to repeat the formula $\varphi$ in the case of a conjunction, disjunction or universal quantifier. For example, we could have defined $l_M(\sigma^\frown a)$ as $\Gamma,\varphi_a$ rather than $\Gamma,\varphi,\varphi_a$ in the case of $\varphi\equiv\varphi_0\land\varphi_1$. However, if one omits $\varphi$ in these cases, then the notation becomes more complicated in the case of an existential quantifier: There one would need to consider $l_M(\tau)$ for all initial segments $\tau$ of $\sigma$, rather than just $l_M(\sigma)$ itself. We prefer to repeat~$\varphi$ and keep the simpler notation in the existential case.

As usual, a function $f:\omega\to M$ is called a branch of the tree $S^r_\psi(M)\subseteq M^{<\omega}$ if we have
\begin{equation*}
f\!\restriction\!n:=\langle f(0),\dots,f(n-1)\rangle\in S^r_\psi(M)
\end{equation*}
for every number~$n\in\omega$. The following proposition is typical for the method of deduction chains insofar as a branch of the search tree yields a model. We refer to~\cite[Section~1.3]{freund-thesis} for a detailed formalization of the satisfaction relation in primitive recursive set theory.

\begin{proposition}\label{prop:branch-model}
If $f$ is a branch of $S^r_\psi(M)$, then $M_f:=\{f(i)\,|\,i\in\omega\}\cup\omega\cup\{r\}$ is extensional and we have $(M_f,\in)\vDash\psi(r)$.
\end{proposition}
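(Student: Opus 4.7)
The plan is to treat the infinite branch $f$ as a failed proof search and to read off a countermodel. Concretely, I would prove by induction on formula complexity that every formula appearing in some label $l_M(f\!\restriction\!n)$ is false in $(M_f,\in)$. Applied to the two formulas of $l_M(\langle\rangle)$, this immediately yields $(M_f,\in)\vDash\psi(r)$ and the extensionality of $(M_f,\in)$.

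The preliminary structural observation is a rotation invariant: inspecting each non-leaf case of Definition~\ref{def:search-tree}, the label $l_M(\sigma^\frown a)$ always begins with the remainder $\Gamma$ of the sequent $l_M(\sigma)=\varphi,\Gamma$, with $\varphi$ and any decomposed subformulas appended at the end. Hence a formula occurring at position $i>0$ of some label on the branch moves to position $i-1$ in the next label, and every formula occurring in some label eventually becomes the first formula of a later label. In particular, a true literal can never occur on an infinite branch (otherwise, once it reaches the front, the branch would terminate at a leaf), so by the absoluteness of atomic $\in$- and $=$-formulas every literal on the branch is false in $(M_f,\in)$. This settles the base case.

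For the induction step, the $\land$, $\lor$ and $\forall$ cases are direct: when $\varphi_0\land\varphi_1$ becomes first, the branch selects some $a\in\{0,1\}$ and $\varphi_a$ enters the next label; a disjunction places both $\varphi_0$ and $\varphi_1$ into the next label; and a universal $\forall_x\,\theta(x)$, becoming first at height $n$ along the branch, places $\theta(f(n))$ into the next label, and $f(n)\in M_f$ then suffices by the induction hypothesis to falsify $\forall_x\,\theta(x)$ in $M_f$. The delicate case is an existential $\varphi\equiv\exists_x\,\theta(x)$: because $\varphi$ is repeated in the next label after each processing, the rotation invariant forces $\varphi$ to be processed infinitely often, and each processing picks the first~$b$ in the enumeration $r,\sigma_0,0,\sigma_1,1,\dots$ for which $\theta(b)$ is not already present. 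I would then argue that every $b^\ast\in M_f=\{r\}\cup\omega\cup\{f(i):i\in\omega\}$ occupies one of the initial slots of this list once $\sigma$ is long enough, and that any $b$ preceding $b^\ast$ can block $b^\ast$ only if $\theta(b)$ was selected at a prior step; since only finitely many slots precede $b^\ast$ and each can block at most once, some processing of $\varphi$ must eventually select $b^\ast$ itself. Thus $\theta(b^\ast)$ appears on the branch for every $b^\ast\in M_f$, and the induction hypothesis falsifies $\exists_x\,\theta(x)$ in $M_f$. I expect this combinatorial bookkeeping in the existential case to be the main obstacle; the remaining cases are routine verifications driven by the structure of Definition~\ref{def:search-tree}.
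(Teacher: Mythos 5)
Your proposal is correct and follows essentially the same route as the paper's proof: an induction on formula height showing that every formula occurring in a label along the branch is false when relativized to $M_f$, using the rotation of sequents to bring each formula to the front and treating each logical case exactly as in Definition~\ref{def:search-tree}. The only (immaterial) difference is in the existential case, where you replace the paper's induction along the enumeration $r,f(0),0,f(1),1,\dots$ by a pigeonhole count of how often earlier slots can block a given $b^\ast$; both arguments rest on the same facts that formulas are never removed and that $\exists_x\,\theta(x)$ is processed infinitely often.
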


It is worth observing that the model $M_f$ in the proposition is countable.

\begin{proof}
An $M$-formula is said to occur on $f$ if it is an entry of a sequent $l_M(f\!\restriction\!n)$ for some number~$n$. We will show that every formula that occurs on $f$ is false when relativized to~$M_f$. To deduce the proposition it suffices to observe that $\neg\psi(r)$ and the negation of extensionality occur in $l_M(\langle\rangle)$ and hence on any branch~$f$. The open claim is established by induction over the height of formulas. Let us first consider a literal $\varphi$ that occurs on~$f$. For suitable numbers $k<m$ and $n$ we can write $l_M(f\!\restriction\!n)=\langle\varphi_0,\dots,\varphi_{m-1}\rangle$ with~$\varphi\equiv\varphi_k$. Considering the construction of~$S^r_\psi(M)$, we see that $\varphi$ is the first formula in $l_M(f\!\restriction\!(n+k))$. If $\varphi$ was true, then $f\!\restriction\!(n+k)$ would be a leaf of $S^r_\psi(M)$, again by construction. This would contradict the assumption that~$f$ is a branch. Hence $\varphi$ must be false, as required (relativization to $M_f$ is irrelevant here, since the literal $\varphi$ contains no quantifiers). Let us now consider the case where $\varphi\equiv\forall_x\,\theta(x)$ is universal. As before, we can find an~$n$ such that $\varphi$ is the first formula in $l_M(f\!\restriction\!n)$. Since $f$ is a branch we have
\begin{equation*}
(f\!\restriction\!n)^\frown f(n)=f\!\restriction\!(n+1)\in S^r_\psi(M).
\end{equation*}
By construction, the instance $\theta(f(n))$ occurs in $l_M(f\!\restriction\!(n+1))$ and hence on~$f$.  Since $\theta(f(n))$ is shorter than $\varphi$, the induction hypothesis tells us that $\theta(f(n))$ is false when relativized to $M_f$. In view of $f(n)\in M_f$ it follows that the relativization of $\varphi\equiv\forall_x\,\theta(x)$ to $M_f$ is false as well. Let us now consider the case of an existential formula~$\varphi\equiv\exists_x\,\theta(x)$ that occurs on~$f$. Invoking the induction hypothesis, it suffices to show that each instance $\theta(a)$ with $a\in M_f$ occurs on~$f$ as well. By induction on~$n$ we show that this holds for every~$a$ in the list
\begin{equation}\label{eq:instances}
r,f(0),0,f(1),1,\dots,f(n-1),n-1.
\end{equation}
Assuming the induction hypothesis, let us argue that the formula $\theta(f(n))$ occurs on $f$ (for $a=r$ or $a=n$ one argues similarly): In the construction of $S^r_\psi(M)$ the sequents in the labels are extended and permuted, but no formula is ever removed. Hence we may pick a number~$N>n$ such that $l_M(f\!\restriction\!N)$ contains $\varphi$ and all instances $\theta(a)$ with $a$ in the list~(\ref{eq:instances}). Increasing~$N$ if necessary, we may assume that $\varphi$ is the first formula in~$l_M(f\!\restriction\!N)$. According to the construction of $S^p_\psi(M)$, the instance $\theta(f(n))$ is then added to~$l_M(f\!\restriction\!(N+1))$, unless it was already present. The remaining cases of a conjunction $\varphi\equiv\varphi_0\land\varphi_1$ and a disjunction $\varphi\equiv\varphi_0\lor\varphi_1$ are similar and easier. We do not need to consider the case of a negation, since all our formulas are assumed to be in negation normal form.
\end{proof}

Recall that we write $\sigma\vartriangleleft\tau$ if the sequence $\sigma$ is a proper end-extension of the sequence~$\tau$. Assuming the axiom of dependent choice ($\mathbf{DC}$), we can construct a branch of any tree on which $\vartriangleleft$ is ill-founded. This leads to the following result, in which the first alternative amounts to the conclusion of reflection.

\begin{corollary}[\textbf{DC}]\label{cor:alternative}
Consider a formula $\psi(x)$ with a single free variable $x$ and no parameters. For each real $r\subseteq\omega$ one of the following alternatives must hold:
\begin{enumerate}[label=(\roman*)]
\item We have $(N,\in)\vDash\psi(r)$ for some transitive set $N\ni r$.
\item The order $(S^r_\psi(M),\vartriangleleft)$ is well-founded for every set $M\supseteq\omega\cup\{r\}$.
\end{enumerate}
\end{corollary}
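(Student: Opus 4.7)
The plan is to prove the corollary by contraposition: assume that alternative (ii) fails for some fixed $M \supseteq \omega \cup \{r\}$, and construct a transitive model witnessing (i). The strategy has three stages: extract an infinite branch from ill-foundedness via \textbf{DC}, use Proposition~\ref{prop:branch-model} to obtain an extensional model $M_f$ satisfying $\psi(r)$, and apply the Mostowski collapse to turn $M_f$ into a genuinely transitive model containing $r$.

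In the first stage, since $(S^r_\psi(M),\vartriangleleft)$ is ill-founded, the definition of $\wf$ supplies a non-empty set $w \subseteq S^r_\psi(M)$ with no $\vartriangleleft$-minimal element. Applying \textbf{DC} to the relation $\vartriangleleft$ restricted to $w$ yields a sequence $\sigma_0,\sigma_1,\dots$ with $\sigma_{i+1}\vartriangleleft\sigma_i$ for all $i$. The orientation of $\vartriangleleft$ is crucial here: a proper end-extension is \emph{longer}, so the lengths $|\sigma_i|$ are strictly increasing, and the pointwise union $f := \bigcup_i \sigma_i$ is a total function $\omega \to M$. To see that $f$ is a branch, note that for every $n$ one can pick $i$ with $|\sigma_i|\geq n$, so that $f\!\restriction\!n$ coincides with $\sigma_i\!\restriction\!n$; the latter lies in $S^r_\psi(M)$ by the initial-segment closure property that defines a tree.

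In the second stage, Proposition~\ref{prop:branch-model} hands us an extensional model $(M_f,\in)\vDash\psi(r)$ with $\omega\cup\{r\}\subseteq M_f$. In the third stage, I apply the Mostowski collapse $\pi\colon M_f\to N$ to produce a transitive set $N$ isomorphic to $(M_f,\in)$. A routine $\in$-induction shows that $\pi$ fixes every natural number, using that $\omega\subseteq M_f$ is transitive; hence $\pi(r)=r$ because $r\subseteq\omega\subseteq M_f$. It follows that $r\in N$ and $(N,\in)\vDash\psi(r)$, establishing (i).

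The main obstacle is a formalisation point rather than a genuine mathematical difficulty: one must check that the Mostowski collapse is available in primitive recursive set theory. The collapse is governed by the $\in$-recursion $\pi(x)=\{\pi(y)\mid y\in x\cap M_f\}$, which fits into the framework of primitive recursive set functions developed in~\cite{freund-thesis}; should that fail for some technical reason, one would simply read the base theory as Kripke--Platek, where the collapse is standard. The only other step requiring care is the direction of $\vartriangleleft$ and the corresponding application of \textbf{DC}, which are routine but easy to get reversed.
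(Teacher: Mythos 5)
Your proposal is correct and follows essentially the same route as the paper: contraposition, extraction of a branch via \textbf{DC}, Proposition~\ref{prop:branch-model}, and the Mostowski collapse with $c(r)=r$ justified by the transitivity of $\omega\cup\{r\}\subseteq M_f$. The only (harmless) difference is that you apply \textbf{DC} directly to $\vartriangleleft$ on the witnessing set and recover the branch as a union using the tree's closure under initial segments, whereas the paper first passes to the immediate-extension relation $\vartriangleleft_0$ on an auxiliary tree $T_z$ so that the chosen sequence steps through one node per level.
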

\begin{proof}
Let us show that~(i) holds if~(ii) fails: Assuming the latter, we get a set $M\supseteq\omega\cup\{r\}$ and a non-empty $z\subseteq S^r_\psi(M)$ without a $\vartriangleleft$-minimal element. Consider
\begin{equation*}
T_z:=\{\sigma\in S^r_\psi(M)\,|\,\tau\vartriangleleft\sigma\text{ for some $\tau\in z$}\}
\end{equation*}
and define a binary relation $\vartriangleleft_0$ on $T_z$ by stipulating that $\sigma\vartriangleleft_0\tau$ holds if $\sigma$ is of the form $\sigma=\tau^\frown a$ for some $a\in M$. Using dependent choice, we get a sequence of elements $\sigma_n\in T_z$ with $\sigma_0=\langle\rangle$ and $\sigma_{n+1}\vartriangleleft_0\sigma_n$ for all~$n$. If we define $f(n)$ as the last entry of $\sigma_{n+1}$, then we have $f\!\restriction\!n=\sigma_n$, so that $f$ is a branch of $T_z\subseteq S^r_\psi(M)$. The set $M_f$ from the previous proposition is then a model of $\psi(r)$, and $M_f$ is extensional. Due to the latter, the Mostowski collapse $c:M_f\to N$ with transitive image~$N$ is an $\in$-isomorphism, so that we get $(N,\in)\vDash\psi(c(r))$. Now it suffices to observe that we have $c(r)=r$, since $\omega\cup\{r\}\subseteq M_f$ is transitive. An account of Mostowski collapsing in primitive recursive set theory is given in~\cite[Proposition~1.2.4]{freund-thesis}.
\end{proof}

\section{Class-sized proof trees and truth in the universe}

In the previous section we have constructed search trees $S^r_\psi(M)$ which test whether a given formula $\psi(r)$ holds in some submodel $M_f$ of $M$. We will now show that the trees $S^r_\psi(M)$ glue to a class-sized proof tree $S^r_\psi(\mathbb V)$ with end formula
\begin{equation*}
 \neg\psi(r)\lor\neg\forall_{x,y}(\forall_z(z\in x\leftrightarrow z\in y)\to x=y).
\end{equation*}
If this tree is well-founded, then we can use induction to conclude that $\psi(r)$ fails in the set-theoretic universe~$\mathbb V$. Together with Corollary~\ref{cor:alternative} this will be enough to establish the reflection principle.

Let us begin with a straightforward but crucial observation, which will allow us to glue the trees $S^r_\psi(M)\subseteq M^{<\omega}$ for different arguments~$M$.

\begin{lemma}\label{lem:search-trees-compatible}
Consider sets $M$ and $N$ with $\omega\cup\{r\}\subseteq M\subseteq N$. Then $\sigma\in S^r_\psi(M)$ is equivalent to $\sigma\in S^r_\psi(N)$, for each $\sigma\in M^{<\omega}$. Furthermore, we have $l_M(\sigma)=l_N(\sigma)$ whenever we have $\sigma\in S^r_\psi(M)$.
\end{lemma}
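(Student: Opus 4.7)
The plan is to prove both assertions simultaneously by induction on the length of $\sigma$, following the recursive structure of Definition~\ref{def:search-tree}. The base case $\sigma=\langle\rangle$ is immediate, since both trees contain the empty sequence and the initial label $\langle\neg\psi(r),\neg\forall_{x,y}(\dots)\rangle$ depends neither on $M$ nor on $N$.

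For the induction step I would write $\sigma=\tau^\frown a$ with $\tau\in M^{<\omega}$ and $a\in M$. The induction hypothesis gives $\tau\in S^r_\psi(M)\Leftrightarrow\tau\in S^r_\psi(N)$, together with $l_M(\tau)=l_N(\tau)$ whenever $\tau$ lies in either tree. If $\tau$ is not in $S^r_\psi(M)$, then it is not in $S^r_\psi(N)$ either, and neither tree contains any extension of $\tau$. Otherwise, both recursion clauses are governed by the first formula $\varphi$ of the common sequent $l_M(\tau)=l_N(\tau)$, so I would run through the clauses case-by-case. In the literal, conjunction, and disjunction cases the successors and labels are defined by purely syntactic data derived from $\varphi$ alone, so both sides agree. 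In the universal case the tree allows $\tau^\frown a$ for any $a\in M$ resp.~$a\in N$; since we have restricted to $\sigma\in M^{<\omega}$, the witness $a$ lies in $M\subseteq N$, and the label $\Gamma,\varphi,\theta(a)$ is produced identically on both sides.

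The only case requiring genuine attention is the existential one, $\varphi\equiv\exists_x\,\theta(x)$, where the chosen witness $b$ is selected from the list $r,\tau_0,0,\tau_1,1,\dots,\tau_{n-1},n-1,n,n+1,\dots$ as the first entry for which $\theta(b)$ does not already occur in $\Gamma$. Here I would point out two things: first, the list depends only on $\tau$ (whose entries all lie in $M$ because $\sigma=\tau^\frown a\in M^{<\omega}$) and on $r$ and $\omega$, both contained in $M$, so the lists used in the two recursions are literally the same; second, the condition ``$\theta(b)$ does not occur in $\Gamma$'' is evaluated against the same sequent $\Gamma$ in both trees, by the induction hypothesis $l_M(\tau)=l_N(\tau)$. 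Consequently the same $b$ is picked on both sides, and $\tau^\frown 0\in S^r_\psi(M)\Leftrightarrow\tau^\frown 0\in S^r_\psi(N)$ with matching labels $\Gamma,\varphi,\theta(b)$.

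I do not foresee a genuine obstacle here; the lemma is really a bookkeeping statement. The one point to be careful about is to formulate the induction so that it simultaneously tracks both membership in the tree and equality of labels (since each clause of Definition~\ref{def:search-tree} uses the label at $\tau$ to define both data at $\tau^\frown a$), and to verify in the existential clause that the defining list of potential witnesses is entirely contained in $M$, which is exactly where the hypothesis $\omega\cup\{r\}\subseteq M$ is used.
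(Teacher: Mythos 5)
Your proposal is correct and follows essentially the same route as the paper: a simultaneous induction over $\sigma$ through the clauses of Definition~\ref{def:search-tree}, with the key observation being that in the existential case the list of candidate witnesses depends only on $r$, $\omega$ and the entries of $\sigma$, not on any enumeration of the ambient set. Your write-up is merely more explicit about the routine cases, which the paper leaves to the reader.
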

\begin{proof}
The claims can be verified by simultaneous induction over the sequence~$\sigma$, following the recursive clauses from Definition~\ref{def:search-tree}. Crucially, the list
\begin{equation*}
r,\sigma_0,0,\sigma_1,1,\dots,\sigma_{n-1},n-1,n,n+1,n+2,\dots,
\end{equation*}
which is considered if the first formula of $l_M(\sigma)=l_N(\sigma)$ is existential, does only depend on the entries of $\sigma=\langle\sigma_0,\dots,\sigma_{n-1}\rangle$ and not on fixed enumerations of the ambient sets $M$ and $N$.
\end{proof}

The previous lemma shows that $\sigma\in S^r_\psi(M)$ does not depend on $M$, as long as we have $\omega\cup\{r\}\subseteq M$ and $\sigma\in M^{<\omega}$. Given a sequence $\sigma\in\mathbb V^{<\omega}$ with arbitrary entries from $\mathbb V$, an obvious choice for $M$ is
\begin{equation*}
M_\sigma:=\{\sigma_0,\dots,\sigma_{n-1}\}\cup\omega\cup\{r\}\quad\text{for}\quad\sigma=\langle\sigma_0,\dots,\sigma_{n-1}\rangle.
\end{equation*}
The labels of $S^r_\psi(M)$ consist of $M$-formulas, which may contain parameters (constant symbols) from~$M$. In the following we will speak of $\mathbb V$-formulas and $\mathbb V$-sequents to emphasize that arbitrary sets are admitted as parameters. The trees $S^r_\psi(M)$ from Definition~\ref{def:search-tree} can now be glued as follows:

\begin{definition}
Consider a formula $\psi\equiv\psi(x)$ with a single free variable~$x$ and no parameters. For each real $r\subseteq\omega$ we define a class $S^r_\psi(\mathbb V)$ by
\begin{equation*}
\sigma\in S^r_\psi(\mathbb V)\quad:\Leftrightarrow\quad\sigma\in\mathbb V^{<\omega}\text{ and }\sigma\in S^r_\psi(M_\sigma).
\end{equation*}
We also define a class function $l:S^r_\psi(\mathbb V)\to\text{``$\mathbb V$-sequents"}$ by setting $l(\sigma):=l_{M_\sigma}(\sigma)$.
\end{definition}

In the previous section we have observed that $(r,M)\mapsto S^r_\psi(M)$ is a primitive recursive set function. Hence $S^r_\psi(\mathbb V)$ is a primitive recursive class, in the sense that its characteristic function is primitive recursive. It is well-known that primitive recursive set functions have $\Sigma$-definable graphs (see~\cite[Section~2]{jensen-karp}). Thus $S^r_\psi(\mathbb V)$ is $\Delta$-definable, uniformly in the parameter~$r$. Clearly, the relation $\vartriangleleft$ is also primitive recursive and $\Delta$-definable (recall that we have $\sigma\vartriangleleft\tau$ if $\sigma$ is a proper end-extension of $\tau$). The following yields a reformulation of alternative~(ii) from Corollary~\ref{cor:alternative}.

\begin{lemma}\label{lem:wf-class-set}
If $(S^r_\psi(M),\vartriangleleft)$ is well-founded for every set $M\supseteq\omega\cup\{r\}$, then the class $(S^r_\psi(\mathbb V),\vartriangleleft)$ is well-founded as well.
\end{lemma}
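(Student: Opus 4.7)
The plan is to reduce well-foundedness of the class $(S^r_\psi(\mathbb V),\vartriangleleft)$ to the hypothesised well-foundedness of the set-sized trees $(S^r_\psi(M),\vartriangleleft)$ by choosing, for a given non-empty $w\subseteq S^r_\psi(\mathbb V)$, a single set $M$ large enough to contain all entries occurring in sequences from $w$. Unfolding the definition of $\wf$, it suffices to show that every non-empty set $w\subseteq S^r_\psi(\mathbb V)$ has a $\vartriangleleft$-minimal element.

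Concretely, given such a $w$, I would first form the set
\begin{equation*}
M:=\omega\cup\{r\}\cup\bigcup\{\{\sigma_0,\dots,\sigma_{n-1}\}\mid\langle\sigma_0,\dots,\sigma_{n-1}\rangle\in w\},
\end{equation*}
whose existence in primitive recursive set theory is immediate from the fact that the map $\sigma\mapsto M_\sigma$ is primitive recursive and that unions of set-indexed families are available. By construction $\omega\cup\{r\}\subseteq M_\sigma\subseteq M$ for every $\sigma\in w$, and each such $\sigma$ lies in $M_\sigma^{<\omega}\subseteq M^{<\omega}$. Applying Lemma~\ref{lem:search-trees-compatible} with the pair $M_\sigma\subseteq M$ then converts the defining condition $\sigma\in S^r_\psi(M_\sigma)$ into $\sigma\in S^r_\psi(M)$, so that $w\subseteq S^r_\psi(M)$.

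Now the hypothesis supplies well-foundedness of $(S^r_\psi(M),\vartriangleleft)$ for this particular~$M$. Applying it to the non-empty subset $w$ produces an element $\sigma\in w$ with no $\vartriangleleft$-predecessor in $w$; this $\sigma$ is the desired $\vartriangleleft$-minimal element of $w$, witnessing $\wf[S^r_\psi(\mathbb V)]$.

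The only delicate point I anticipate is verifying that the collection $M$ is indeed available as a set in the weak base theory: one needs the fact (noted in the paragraph after Definition~\ref{def:search-tree}) that the operation $\sigma\mapsto M_\sigma$ is primitive recursive, together with a union axiom, to legitimise the displayed definition of $M$. Once $M$ is in hand, Lemma~\ref{lem:search-trees-compatible} does essentially all of the remaining work, so the argument should be a short application of the lemma rather than a new construction.
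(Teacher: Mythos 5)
Your proposal is correct and coincides with the paper's own argument: the paper likewise takes $M:=\bigcup\{M_\sigma\mid\sigma\in z\}$ for a given non-empty subclass-turned-set $z$, invokes Lemma~\ref{lem:search-trees-compatible} to conclude $z\subseteq S^r_\psi(M)$, and then applies the hypothesis to extract a $\vartriangleleft$-minimal element. Your added remark on the availability of $M$ as a set in the weak base theory is a sensible (and easily discharged) precaution that the paper leaves implicit.
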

\begin{proof}
Given a non-empty $z\subseteq S^r_\psi(\mathbb V)\subseteq\mathbb V^{<\omega}$, we put
\begin{equation*}
M:=\bigcup\{M_\sigma\,|\,\sigma\in z\}.
\end{equation*}
Due to Lemma~\ref{lem:search-trees-compatible} we have $z\subseteq S^r_\psi(M)$. Now the assumption of the lemma yields a $\vartriangleleft$-minimal element of~$z$, as required.
\end{proof}

In the proof of our main result we will want to use induction over $\sigma\in S^r_\psi(\mathbb V)$ to show that each sequent $l(\sigma)$ contains a true formula. The following observation ensures that the required truth definition is available.

\begin{lemma}\label{lem:subformula-property}
Any formula in a sequent $l(\sigma)$ with $\sigma\in S^r_\psi(\mathbb V)$ is a substitution instance of a subformula of $\neg\psi(r)$ or of $\neg\forall_{x,y}(\forall_z(z\in x\leftrightarrow z\in y)\to x=y)$.
\end{lemma}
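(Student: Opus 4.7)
The plan is to prove the lemma by induction on the length of $\sigma \in S^r_\psi(\mathbb V)$, following the recursive clauses of Definition~\ref{def:search-tree}. For the base case $\sigma = \langle\rangle$ the labelling function returns exactly the sequent $\langle \neg\psi(r), \neg\forall_{x,y}(\forall_z(z\in x\leftrightarrow z\in y)\to x=y)\rangle$, so each of its two entries is trivially a substitution instance of itself as a subformula of one of the designated end-formulas.

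For the inductive step I would write $l(\sigma) = \varphi, \Gamma$, apply the induction hypothesis to obtain that every formula in $l(\sigma)$ has the required form, and then inspect each clause of the definition. In every clause, $l(\sigma^\frown a)$ arises from $\varphi, \Gamma$ by permuting the entries (shifting $\varphi$ to the end) and, unless $\varphi$ is a true literal where $\sigma$ is a leaf, appending one or two new formulas: $\varphi_a$ if $\varphi \equiv \varphi_0 \wedge \varphi_1$; both $\varphi_0$ and $\varphi_1$ if $\varphi \equiv \varphi_0 \vee \varphi_1$; $\theta(a)$ if $\varphi \equiv \forall x\, \theta(x)$; and $\theta(b)$ for an appropriate $b \in M_{\sigma^\frown a}$ if $\varphi \equiv \exists x\, \theta(x)$. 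Permutation and retention preserve the substitution-instance-of-a-subformula property, so only the newly appended formulas require verification.

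The key---and only subtle---point is the quantifier cases, where a fresh parameter is substituted into $\theta$. The appropriate framework is to regard a subformula of a closed formula $\chi$ as a formula whose free variables are precisely those bound higher up in $\chi$, and a substitution instance as any result of replacing those free variables by sets. Under this convention, if $\varphi \equiv \forall x\, \theta(x)$ is a substitution instance of a subformula $\chi'$ of one of the end-formulas, then $\chi'$ must itself be of the form $\forall x\, \theta'(x,\vec y)$ with $\theta'(x,\vec y)$ a subformula of the same end-formula, and the witnessing substitution sends $\vec y$ to parameters $\vec s$ with $\theta(x) \equiv \theta'(x,\vec s)$; hence $\theta(a) \equiv \theta'(a,\vec s)$ is the substitution instance of $\theta'(x,\vec y)$ that additionally sends $x$ to $a$. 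The existential case is identical, and the propositional cases work by the same bookkeeping, decomposing $\chi'$ according to its top-level connective. The main obstacle is thus really just to set this framework up cleanly; once that is done, the induction is routine, since the search-tree construction is engineered so that no formula it produces has a logical skeleton unrelated to the two designated end-formulas.
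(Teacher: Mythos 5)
Your proposal is correct and takes essentially the same approach as the paper, which also proves the lemma by a straightforward induction over the sequence $\sigma$ following the recursive clauses of Definition~\ref{def:search-tree} (the paper merely reduces first to the set-sized trees $S^r_\psi(M)$ for fixed $M$, whereas you work directly on the class-sized tree, implicitly via Lemma~\ref{lem:search-trees-compatible}). Your careful treatment of what ``substitution instance of a subformula'' means in the quantifier cases fills in detail that the paper leaves as ``straightforward.''
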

\begin{proof}
In view of the definition of $S^r_\psi(\mathbb V)$, it suffices to establish the claim for all formulas that occur in some sequent $l_M(\sigma)$ with $M\supseteq\omega\cup\{r\}$ and~$\sigma\in S^r_\psi(M)$. For fixed~$M$, this can be accomplished by a straightforward induction over the sequence~$\sigma$, which follows the recursive clauses from Definition~\ref{def:search-tree}.
\end{proof}

We now have all ingredients to complete the proof of our main result. In the introduction we have given precise formulations of the reflection and induction principles that it involves.

\begin{theorem}\label{thm:main-result}
The following are equivalent over primitive recursive set theory extended by the axiom of dependent choice:
\begin{enumerate}[label=(\arabic*)]
\item reflection for first order formulas with real parameters,
\item induction over $\Delta(\mathbb R)$-definable well-founded classes. 
\end{enumerate}
\end{theorem}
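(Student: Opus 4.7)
The forward implication (1) $\Rightarrow$ (2) is already established by Proposition~\ref{prop:reflection-to-induction}, so my plan is to prove (2) $\Rightarrow$ (1). I would fix a first order formula $\psi(x)$ and a real $r\subseteq\omega$, assume $\psi(r)$, and aim to produce a transitive $N$ with $(N,\in)\vDash\psi(r)$. By Corollary~\ref{cor:alternative}, either alternative~(i) supplies such an $N$ directly---in which case there is nothing further to prove---or alternative~(ii) holds, and it suffices to derive a contradiction from $\psi(r)$. So I would assume~(ii): Lemma~\ref{lem:wf-class-set} then gives that $(S^r_\psi(\mathbb V),\vartriangleleft)$ is well-founded, and (as remarked just before that lemma) it is $\Delta$-definable with the real $r$ as its only parameter, so induction along it is licensed by~(2).

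The central step is to prove, by induction on $\sigma\in S^r_\psi(\mathbb V)$ along $\vartriangleleft$, that the sequent $l(\sigma)$ contains at least one entry that is true in $\mathbb V$. For this to be a legitimate instance of the induction schema, I rely on Lemma~\ref{lem:subformula-property}: every formula appearing in any label is a substitution instance of a subformula of $\neg\psi(r)$ or of the negation of extensionality, so a bounded partial truth predicate---definable uniformly in $\sigma$---suffices, and no universal satisfaction relation is needed. The induction step would then split on the first entry $\varphi$ of $l(\sigma)=\varphi,\Gamma$, following the recursive clauses of Definition~\ref{def:search-tree}: true literals discharge the claim at once; for false literals, conjunctions, disjunctions, and existentials, applying the induction hypothesis to a suitably chosen successor yields a true formula that must already lie in the relevant portion of $l(\sigma)$; for a universal $\varphi\equiv\forall_x\theta(x)$ that fails in $\mathbb V$, one picks any $a\in\mathbb V$ witnessing its failure and applies the hypothesis at $\sigma^\frown a$, where both $\varphi$ and $\theta(a)$ are false, so the true formula must be in $\Gamma$. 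Specialising the conclusion to $\sigma=\langle\rangle$ forces one of $\neg\psi(r)$ and the negation of extensionality to be true in $\mathbb V$; since extensionality is a theorem of the base theory and $\psi(r)$ was assumed, this is the desired contradiction.

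The main obstacle I anticipate is complexity bookkeeping: the induction formula ``some entry of $l(\sigma)$ is true'' must be presented as a genuine object-language formula with $r$ as its only real parameter, so that it can legitimately be substituted into the induction schema~(2). Lemma~\ref{lem:subformula-property} makes this routine in principle, since only finitely many formula shapes arise, but the construction has to be carried out inside primitive recursive set theory, using the formalisation of satisfaction from~\cite[Section~1.3]{freund-thesis}. Once that partial truth predicate is in place, the argument above is essentially the standard soundness proof for a Tait-style infinitary calculus equipped with a $\mathbb V$-logic (universal) rule, transplanted into the set-theoretic universe itself.
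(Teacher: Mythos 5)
Your proposal is correct and follows essentially the same route as the paper's own proof: Corollary~\ref{cor:alternative} plus Lemma~\ref{lem:wf-class-set} to reduce to the well-founded class $(S^r_\psi(\mathbb V),\vartriangleleft)$, Lemma~\ref{lem:subformula-property} to justify the partial truth predicate, and induction along $\vartriangleleft$ showing each $l(\sigma)$ contains a true formula, with the same case analysis on the first entry. The only cosmetic difference is that you argue by contradiction from $\psi(r)$ under alternative~(ii) while the paper takes the contrapositive, which is the same argument rearranged.
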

\begin{proof}
From Proposition~\ref{prop:reflection-to-induction} we already know that~(1) implies~(2). For the converse direction we assume~(2) and establish an arbitrary instance
\begin{equation*}
\forall_{r\subseteq\omega}(\psi(r)\to\exists_N(\text{``$N\ni r$ is transitive"}\land (N,\in)\vDash\psi(r)))
\end{equation*}
of~(1). Aiming at the contrapositive, consider a real $r$ such that $\psi(r)$ does not hold in any transitive model $N\ni r$. By Corollary~\ref{cor:alternative} and Lemma~\ref{lem:wf-class-set}, it follows that $(S^r_\psi(\mathbb V),\vartriangleleft)$ is a well-founded class. We will use induction over the latter to show that each sequent $l(\sigma)$ with $\sigma\in S^r_\psi(\mathbb V)$ contains a true formula. Note that the reference to truth is unproblematic, as Lemma~\ref{lem:subformula-property} ensures that we are only concerned with instances of finitely many formulas. If the induction is successful, it tells us that there is a true formula in the sequent
\begin{equation*}
l(\langle\rangle)=\langle\neg\psi(r),\neg\forall_{x,y}(\forall_z(z\in x\leftrightarrow z\in y)\to x=y)\rangle.
\end{equation*}
Since extensionality is an axiom of primitive recursive set theory, this means that the formula $\neg\psi(r)$ must be true in the set-theoretic universe. Hence reflection for $\psi(r)$ holds because its premise fails. It remains to carry out the induction. To prove the induction step, we distinguish cases according to the first formula in
\begin{equation*}
l(\sigma)=\varphi,\Gamma.
\end{equation*}
Note that the sequent $l(\sigma)$ is never empty, as pointed out in Definition~\ref{def:search-tree}. Let us begin with the case where $\varphi$ is a literal. If $\varphi$ is true, then we are done. Otherwise, Definition~\ref{def:search-tree} yields $\sigma^\frown 0\in S^r_\psi(M_\sigma)$ and $l_{M_\sigma}(\sigma^\frown 0)=\Gamma,\varphi$. In view of $M_{\sigma^\frown 0}=M_\sigma$ we get $\sigma^\frown 0\in S^r_\psi(\mathbb V)$ and $l(\sigma^\frown 0)=\Gamma,\varphi$. We also have $\sigma^\frown 0\vartriangleleft\sigma$, so that the induction hypothesis yields a true formula in~$l(\sigma^\frown 0)$. In the present case, the sequent $l(\sigma^\frown 0)$ is just a permutation of $l(\sigma)$, so that the latter contains the same true formula. Next, we consider the case of an existential formula $\varphi\equiv\exists_x\,\theta(x)$. Similarly to the previous case, the construction from Definition~\ref{def:search-tree} leads to
\begin{equation*}
\sigma^\frown 0\in S^r_\psi(\mathbb V)\quad\text{and}\quad l(\sigma^\frown 0)=\Gamma,\varphi,\theta(b),
\end{equation*}
for a certain parameter~$b$. Again, the induction hypothesis yields a true formula in the sequent $l(\sigma^\frown 0)$. If this formula lies in $\Gamma,\varphi$, then it occurs in $l(\sigma)$ and we are done. So now assume that $\theta(b)$ is true. Then $b$ witnesses the truth of the existential formula~$\varphi$, and we are done as well. In the most interesting case we are concerned with a universal formula $\varphi\equiv\forall_x\,\theta(x)$. Aiming at a contradiction, we assume that every formula in $l(\sigma)$ is false. In particular $\varphi$ is false, and we may pick an $a\in\mathbb V$ such that $\theta(a)$ is false as well. In view of $M_\sigma\subseteq M_{\sigma^\frown a}$ we can use Lemma~\ref{lem:search-trees-compatible} to obtain $\sigma\in S^r_\psi(M_{\sigma^\frown a})$ and
\begin{equation*}
l_{M_{\sigma^\frown a}}(\sigma)=l_{M_\sigma}(\sigma)=l(\sigma)=\varphi,\Gamma.
\end{equation*}
By definition we have $a\in M_{\sigma^\frown a}$, so that Definition~\ref{def:search-tree} yields $\sigma^\frown a\in S^r_\psi(M_{\sigma^\frown a})$ and hence $\sigma^\frown a\in S^r_\psi(\mathbb V)$. We also get
\begin{equation*}
l(\sigma^\frown a)=l_{M_{\sigma^\frown a}}(\sigma^\frown a)=\Gamma,\varphi,\theta(a).
\end{equation*}
In view of $\sigma^\frown a\vartriangleleft\sigma$, the induction hypothesis tells us that some formula in this sequent is true. This contradicts the assumption that all formulas in $l(\sigma)=\varphi,\Gamma$ and the instance $\theta(a)$ are false. The remaining cases of a disjunction $\varphi\equiv\varphi_0\lor\varphi_1$ and of a conjunction $\varphi\equiv\varphi_0\land\varphi_1$ are similar and easier.
\end{proof}

Let us conclude this paper with some observations on formula complexity. Concerning induction along $\Delta(\mathbb R)$-definable well-founded classes, we first show that \mbox{$\Sigma_n$-induction} is equivalent to $\Pi_{n+1}$-induction, for $n>0$. The following argument is similar to a proof by R.~Sommer~\cite[Lemma~4.5]{sommer95}: We want to establish induction for a $\Pi_{n+1}$-formula $\varphi(x)\equiv\forall_y\,\theta(x,y)$ along a $\Delta(\mathbb R)$-definable class $(X,<_X)$. For this purpose we order the class $X\times\mathbb V$ of pairs by
\begin{equation*}
\langle x,y\rangle<_{X\times\mathbb V}\langle x',y'\rangle\quad:\Leftrightarrow\quad x<_X x'.
\end{equation*}
It is straightforward to show that $<_{X\times\mathbb V}$ is well-founded if the same holds for $<_X$. Assume that $\varphi$ satisfies the premise of induction, and consider the $\Sigma_n$-formula
\begin{equation*}
\theta'(z):\equiv\exists_{x,y}(z=\langle x,y\rangle\land\theta(x,y)).
\end{equation*}
To establish the induction step for $\theta'$, we need to deduce $\theta(x,y)$ from the assumption that $\theta(x',y')$ holds for all pairs $\langle x',y'\rangle<_{X\times\mathbb V}\langle x,y\rangle$. The latter amounts to
\begin{equation*}
\forall_{x'<_X x}\forall_{y'}\theta(x',y')\equiv\forall_{x'<_X x}\,\varphi(x').
\end{equation*}
Since $\varphi$ satisfies the premise of induction, we get $\varphi(x)$ and in particular $\theta(x,y)$ for the relevant~$y$. Now $\Sigma_n$-induction along $<_{X\times\mathbb V}$ yields
\begin{equation*}
\forall_{x\in X}\forall_y\theta(x,y)\equiv\forall_{x\in X}\varphi(x).
\end{equation*}
This is the conclusion of induction for the $\Pi_{n+1}$-formula $\varphi$.

We now discuss the amount of reflection that is needed to deduce induction for~a given formula. In view of the previous paragraph, we may focus on an induction formula $\varphi$ of complexity $\Pi_{n+1}$. In this case the reflection formula
\begin{equation*}
  \psi(r)\equiv\forall_{x,y}(\theta_\Sigma(x,y,r)\leftrightarrow\theta_\Pi(x,y,r))\land\neg\ind[X^r,\varphi]
 \end{equation*}
 from the proof of Proposition~\ref{prop:reflection-to-induction} has complexity $\Sigma_{n+3}$ (over Kripke-Platek set theory, where $\Sigma$-formulas and $\Sigma_1$-formulas coincide). In view of J\"ager and Strahm's analysis of $\omega$-model reflection~\cite{jaeger-strahm-bi-reflection}, one might have expected a slightly stronger result, which would deduce $\Pi_{n+1}$-induction from $\Pi_{n+2}$-reflection (the latter does not seem to imply $\Sigma_{n+3}$-reflection in our setting, due to the restriction to real parameters). To prove this strengthening one could try to replace $\ind[X^r,\varphi]$ by
 \begin{equation*}
 \forall_{x\in X^r}(\forall_{y<_X^r x}\varphi(y,\vec c)\to\varphi(x,\vec c))\to\forall_{x\in X^r}\varphi(x,\vec c),
 \end{equation*}
where the quantified variables $\vec z$ from $\ind[X^r,\varphi]$ are instantiated to suitable parameters~$\vec c$. If $\varphi$ has complexity $\Pi_{n+1}$, then the displayed formula has complexity~$\Sigma_{n+2}$, and the reflection formula has complexity~$\Pi_{n+2}$. The problem is that we cannot apply our reflection principle to the new formula, since the parameters $\vec c$ may not be reals. Our observation does show that reflection for $\Pi_{n+2}$-formulas implies induction for $\Pi_{n+1}$-formulas with real parameters.

Finally, we consider the passage from induction to reflection: In the proof of Theorem~\ref{thm:main-result} we have used induction over $\sigma\in S^r_\psi(\mathbb V)$ to show that the sequent~$l(\sigma)$ contains a true formula. If $\psi$ has complexity $\Sigma_{n+1}$ with $n>0$, then $l(\sigma)$ can only contain $\Pi_{n+1}$-formulas, due to Lemma~\ref{lem:subformula-property}. If $\psi$ is a $\Pi_{n+2}$-formula, we can lower the complexity by assuming that $\psi(r)$ holds (since this is the premise of reflection). Under this assumption, any true formula in $l(\sigma)$ must again have complexity $\Pi_{n+1}$. Hence $\Pi_{n+1}$-induction is sufficient in both cases. Altogether, we have seen that $\Sigma_{n+3}$-reflection implies $\Pi_{n+1}$-induction (or equivalently $\Sigma_n$-induction), which in turn implies $\Pi_{n+2}$-reflection, for $n>0$. The slight mismatch in terms of logical complexity seems to be caused by the partial restriction to real parameters.

\bibliographystyle{amsplain}
\bibliography{Set-reflection-induction}

\end{document}